\documentclass[letterpaper, 10 pt, conference]{ieeeconf} 

\IEEEoverridecommandlockouts                              
\overrideIEEEmargins

\usepackage{graphicx}
\usepackage{amsmath,amssymb,amsfonts,eucal}
\usepackage{amscd}
\usepackage{mathrsfs}
\usepackage{bm,amsbsy}
\usepackage[all]{xy}

%%% Boldstyle paragraph titles 

\newcommand{\newsec}[1]{\medskip {\bf \noindent #1. \quad }}

%%% Tweak some environments of IEEEConf

\makeatletter
\def\@begintheorem#1#2{\medskip\@IEEEtmpitemindent\itemindent\topsep 0pt\itshape\trivlist%
    \item[\hskip \labelsep{\indent\bf #1\ #2.}]\itemindent\@IEEEtmpitemindent}
\def\@endtheorem{\endtrivlist\unskip\medskip}
\makeatother

%%% Theorem environments

\newtheorem{proposition}{Proposition}[section]
\newtheorem{lemma}[proposition]{Lemma}

\newtheorem{theorem}[proposition]{Theorem}

\title{\LARGE \bf %
Stokes-Dirac Structures through Reduction of \\ Infinite-Dimensional Dirac Structures}

\author{Joris Vankerschaver, Hiroaki Yoshimura, Melvin Leok, Jerrold E. Marsden
\thanks{\small 
J. Vankerschaver is with the Department of Mathematical Physics and Astronomy, Ghent University, B-9000 Ghent, Belgium. Research supported by by a Postdoctoral Fellowship of the Research Foundation --- Flanders (FWO-Vlaanderen). \newline
H. Yoshimura is with the Department of Applied Mechanics and Aerospace Engineering, Waseda University, Ohkubo, Shinjuku, Tokyo 169-8555, Japan.  Research partially supported by JSPS Grant-in-Aid 20560229 and by JST, CREST.\newline 
M. Leok is with the Department of Mathematics, University of California at San Diego, 9500 Gilman Drive, La Jolla, CA 92093-0112, USA.  Research partially supported by NSF grant DMS-1010687.
\newline 
J. Marsden is with the Control and Dynamical Systems Department, California Institute of Technology, Pasadena, CA 91125, USA.}}

\begin{document}
\maketitle

\begin{abstract}
	We consider the concept of Stokes-Dirac structures in boundary control theory proposed by van der Schaft and Maschke. We introduce Poisson reduction in this context and show how Stokes-Dirac structures can be derived through symmetry reduction from a canonical Dirac structure on the unreduced phase space.  In this way, we recover not only the standard structure matrix of Stokes-Dirac structures, but also the typical non-canonical advection terms in (for instance) the Euler equation.
\end{abstract}

\section{Introduction}

The Hamiltonian formalism has been successfully applied to the description of a variety of field theories, among others the Euler equations for ideal fluids, the Maxwell equations, and the Korteweg-de Vries equation (see \cite{AbMa78, ArKoNe1997, Mo1998} for a historical survey).  Most of these efforts have been directed towards field theories defined on a manifold without boundary, for which a comprehensive theory of Poisson brackets, symmetry reduction and stability analysis exists.  

From the point of view of control and interconnection, it is more natural to consider \emph{open} systems \cite{Willems2007}, and more specifically field theories with varying boundary conditions. As pointed out in \cite{VaMa2002, JeVa2008}, non-zero energy flow through the boundary causes the usual Poisson operator in Hamilton's equation to be no longer skew-symmetric, so that these system fall outside the scope of the traditional Hamiltonian framework.  These difficulties were circumvented in \cite{VaMa2002} by means of a certain notion of infinite-dimensional Dirac structure called a \emph{Stokes-Dirac structure}, whose Hamiltonian equations allow for a non-zero energy flux through the boundary.

Dirac structures \cite{Courant1990} arise naturally in the modeling of a range of mechanical systems (see \cite{DaSc1999} and \cite{MarsdenDirac} for an overview).  In the case of Stokes-Dirac structures, this description is extended by describing the underlying field theory in terms of \emph{differential forms}.  Stokes' theorem then ensures that boundary energy flow is properly incorporated.

While Stokes-Dirac structure have proven to be successful in describing a variety of control-theoretic systems, they are as of yet not fully understood.  The foundational paper \cite{VaMa2002} offers a definition of a Stokes-Dirac structure, but this description often has to be augmented to accommodate specific examples such as Euler's equations for a compressible fluid. 

\newsec{Contributions}
The aims and achievements of this paper are twofold.  In the first part of the paper, we consider field theories whose configuration space $Q$ is a space of forms, on which a symmetry group $G$ acts which is again a space of forms.  We start from the canonical Dirac structure on $T^\ast Q$ and perform symmetry reduction.  We then show that the resulting reduced Dirac structure is precisely a Stokes-Dirac structure in the sense of \cite{VaMa2002}.  We illustrate this procedure by means of the telegrapher's equations, Maxwell's equations, and the equations for a vibrating string.  In the second part of the paper, we then show that a similar reduction philosophy can be used to derive non-canonical Stokes-Dirac structures as well.  We demonstrate this by deriving from first principles the Stokes-Dirac structure for a compressible isentropic fluid.

Throughout this paper, we only treat the case of boundaryless manifolds, a simplification which greatly clarifies the reduction picture.  We return to the issue of boundary conditions at the end of the paper, where we argue how boundary terms can be described by means of extended dual spaces.

\section{Dirac Structures and Reduction}

Dirac structures were described from a mathematical point of view in \cite{Courant1990} and were applied to the modeling of Lagrangian and Hamiltonian mechanical systems in \cite{MarsdenDirac, DaSc1999}.  We recall here the basic concepts and refer to these papers for more information.

\newsec{Dirac Structures on Manifolds}
Let $Q$ be a manifold and define a pairing on $TQ \oplus T^\ast Q$ given by 
\[
	\left<\!\left<(v, \alpha), (w, \beta) \right>\!\right> = 
	\frac{1}{2} (\alpha(w) + \beta(v)).
\]
For a subspace $D$ of $TQ \oplus T^\ast Q$, we define the orthogonal complement $D^\perp$ as the space of all $(v, \alpha)$ such that $\left<\!\left<(v, \alpha), (w, \beta) \right>\!\right> = 0$ for all $(w, \beta)$.  A \emph{\textbf{Dirac structure}} is then a subbundle $D$ of $TQ \oplus T^\ast Q$ which satisfies $D = D^\perp$.

\newsec{The Canonical Dirac Structure}
Let $Q$ be equipped with a symplectic form $\omega$ and note that $\omega$ induces a map $\flat: TQ \rightarrow T^\ast Q$ given by $\flat(v) = \mathbf{i}_v \omega$ for $v \in TQ$.  Since $\omega$ is symplectic, $\flat$ can be inverted and we denote the inverse map by $\sharp: T^\ast Q \rightarrow TQ$, referred to as the \emph{\textbf{Poisson structure}} induced by $\omega$.  It can easily checked that the graph of $\flat$ (or equivalently of $\sharp$), given by 
\begin{align}
	D_{T^\ast Q} & := \{ (v, \flat(v)) : v \in TQ \} 
		\nonumber \\
		& = \{ (\sharp(\alpha), \alpha) : 
			\alpha \in T^\ast Q \}
				\label{poisson}
\end{align}
is a Dirac structure.  In this paper we will exclusively deal with this kind of Dirac structures, but it should be noted that not all Dirac structures can be represented in this way.

\newsec{Reduction of Dirac Structures}
Let $G$ be a Lie group which acts on $Q$ from the right and assume that the quotient space $Q/G$ is again a manifold.  Denote the action of $g \in G$ on $q \in Q$ by $q \cdot g$ and the induced actions of $g \in G$ on $TQ$ and $T^\ast Q$ by $v \cdot g$ and $\alpha \cdot g$, for $v \in TQ$ and $\alpha \in T^\ast Q$.  Note that the action on the cotangent bundle is defined by $\left< \alpha \cdot g, v \right> = \left< \alpha, v \cdot g^{-1} \right>$.  In what follows, we will focus mostly on the reduced cotangent bundle $(T^\ast Q)/G$.  When no confusion is possible, we will denote this space by $T^\ast Q/G$.

Consider now the canonical Dirac structure on $T^\ast Q$.  There are various ways to describe the induced Dirac structure on $T^\ast Q/G$ (see among others \cite{BlSc2001, YoMa2007} for an overview of Dirac reduction theory), but for our purposes, the following point of view is most suitable.  Let $\sharp : T^\ast Q \rightarrow TQ$ be the map \eqref{poisson} used in the definition of $D_{T^\ast Q}$.  The reduced Dirac structure $D_{T^\ast Q/G}$ on $T^\ast Q/G$ can now be described as the graph of a reduced map $[\sharp] : T^\ast (T^\ast Q/G) \rightarrow T(T^\ast Q/G)$ defined as follows.

Let $\pi_G : T^\ast Q \rightarrow T^\ast Q/G$ be the quotient map and consider an element $(\rho, \pi)$ in $T^\ast Q$.  The tangent map of $\pi_G$ at $(\rho, \pi)$ is denoted by $T_{(\rho, \pi)} \pi_G  :  T_{(\rho, \pi)} (T^\ast Q) \rightarrow T_{(\rho, \pi)} (T^\ast Q/G)$, and its dual by $T_{(\rho, \pi)}^\ast \pi_G : T^\ast_{\pi_G(\rho, \pi)} (T^\ast Q/G) \rightarrow T^\ast_{\pi_G(\rho, \pi)} (T^\ast Q)$.  We do not dwell on the exact definition of these maps any further, as the definition will be given in some concrete cases below.  The reduced map $[\sharp]$ now fits into the following commutative diagram:
\begin{equation} \label{prescription}
\xymatrix{
T^\ast_{(\rho, \pi)}(T^\ast Q) \ar[r]^\sharp & T_{(\rho, \pi)}(T^\ast Q) 
	\ar[d]^{T_{(\rho, \pi)} \pi_G} \\
T^\ast_{\pi_G(\rho, \pi)}(T^\ast Q/G) \ar[u]_{T^\ast_{(\rho, \pi)} \pi_G} \ar[r]_{[\sharp]} & T_{\pi_G(\rho, \pi)}(T^\ast Q/G).
}\end{equation}

\medskip
\section{Spaces of Differential Forms}

Throughout this paper, $M$ will be a compact $n$-dimensional manifold without boundary.  In most cases, $M$ can be assumed to be contractible, but this is not necessary.  We denote the space of $k$-forms on $M$ by $\Omega^k$.   

\newsec{The Configuration Space}
The physical problems considered further on will be defined on certain spaces of forms on $M$, and so we single out one specific set of forms, which we denote by $Q := \Omega^k$.   Since $Q$ is a vector space, its tangent bundle can be identified with $TQ = Q \times Q$, while its cotangent bundle is $T^\ast Q = Q \times Q^\ast$.  The latter can be made more explicit by noting that $Q^\ast = \Omega^{n-k}$, where the duality pairing between $Q = \Omega^k$ and $Q^\ast = \Omega^{n-k}$ is given by 
\[
	\left< \rho, \sigma \right> = \int_M \rho \wedge \sigma 
\]
for $\rho \in Q$ and $\sigma \in Q^\ast$.  

For future reference, we note that the tangent bundle $T ( T^\ast Q )$ is isomorphic to $(Q \times Q^\ast) \times (Q \times Q^\ast)$, with a typical element denoted by $(\rho, \pi, \dot{\rho}, \dot{\pi})$, while $T^\ast (T^\ast Q) = (Q \times Q^\ast) \times (Q^\ast \times Q)$, with a typical element denoted by $(\rho, \pi, e_\rho, e_\pi)$.  The duality pairing between $T(T^\ast Q)$ and $T^\ast (T^\ast Q)$ is given by 
\begin{equation} \label{origdual}
	\left<  (\rho, \pi, e_\rho, e_\pi), (\rho, \pi, \dot{\rho}, \dot{\pi}) \right>
	= \int_M ( e_\rho \wedge \dot{\rho} + e_\pi \wedge \dot{\pi}) 
\end{equation}
Whenever the base point $(\rho, \pi)$ is clear from the context, we will denote $(\rho, \pi, \dot{\rho}, \dot{\pi})$ and $(\rho, \pi, e_\rho, e_\pi)$ simply by $(\dot{\rho}, \dot{\pi})$ and $(e_\rho, e_\pi)$.

\newsec{The Symmetry Group}
The set of $(k - 1)$-forms is a vector space, and hence an Abelian group, which we denote by $G$.  The group $G$ acts on $Q$ by the following additive action: for $\alpha \in G$ and $\rho \in Q$, 
\begin{equation} \label{addaction}
	\rho \cdot \alpha  = \rho + \mathbf{d} \alpha.
\end{equation}
This action lifts to $TQ$ and $T^\ast Q$ in the standard way and is given explicitly by 
\[
	 (\rho, \dot{\rho}) \cdot \alpha = (\rho + \mathbf{d} \alpha, \dot{\rho})
	\quad \text{and} \quad 
	 (\rho, \pi) \cdot \alpha = (\rho + \mathbf{d} \alpha, \pi)
\]
for $\alpha \in G$, $(\rho, \dot{\rho}) \in TQ$ and $(\rho, \pi) \in T^\ast Q$.

\newsec{The Reduced Configuration Space}
If we assume that the $k$-th de Rham cohomology of $M$ vanishes, then the quotient space $Q/G$ can be given an explicit description (see \cite{ArnoldKhesin}).  The elements of $Q/G$ are equivalence classes $[\rho]$ of $k$-forms up to exact forms, so that the exterior differential determines a well-defined map from $Q/G$ to $\mathbf{d} \Omega^k$, given by $[\rho] \mapsto \mathbf{d} \rho$.  As $H^k(M, \mathbb{R}) = 0$, this map is an isomorphism and so $Q / G = \mathbf{d} \Omega^k$.  As a result, we have that the quotient $(T^\ast Q/G)$ is isomorphic to $Q/G \times Q^\ast$, or explicitly
\[
	(T^\ast Q)/G =  \mathbf{d} \Omega^k \times \Omega^{n - k}. 
\]
For future reference, we denote the quotient map by $\pi_G : T^\ast Q \rightarrow (T^\ast Q)/G$.  It is given by 
\begin{equation} \label{quotmap}
	\pi_G (\rho, \pi) = (\mathbf{d} \rho, \pi).
\end{equation}

We will denote a typical element of $T^\ast Q/G$ by $(\bar{\rho}, \bar{\pi})$, with $\bar{\rho} \in \mathbf{d} \Omega^k$ and $\bar{\pi} \in \Omega^{n - k}$.  Elements of $T(T^\ast Q/G)$ will be denoted by $(\bar{\rho}, \bar{\pi}, \dot{\bar{\rho}}, \dot{\bar{\pi}})$, while the elements of $T^\ast(T^\ast Q/G)$ will be denoted by $(\bar{\rho}, \bar{\pi}, \bar{e}_\rho, \bar{e}_\pi)$.  For the duality pairing, we use the same sign conventions as in \eqref{origdual}, namely we put
\[ 
	\left<  (\bar{\rho}, \bar{\pi}, \bar{e}_\rho, \bar{e}_\pi), 
		(\bar{\rho}, \bar{\pi}, \dot{\bar{\rho}}, \dot{\bar{\pi}}) \right>
	= \int_M ( \bar{e}_\rho \wedge \dot{\bar{\rho}} + 
		\bar{e}_\pi \wedge \dot{\bar{\pi}}  ). 
\]
As before, whenever the base point $(\bar{\rho}, \bar{\pi})$ is clear, we will denote $(\bar{\rho}, \bar{\pi}, \dot{\bar{\rho}}, \dot{\bar{\pi}})$ simply by $(\dot{\bar{\rho}}, \dot{\bar{\pi}})$, and similarly for $(\bar{\rho}, \bar{\pi}, \bar{e}_\rho, \bar{e}_\pi)$.

\newsec{The Reduced Dirac Structure}
The cotangent bundle $T^\ast Q$ is equipped with a canonical symplectic form $\omega$ given by $\omega(\rho, \pi) ((\dot{\rho}, \dot{\pi}), (\dot{\rho}', \dot{\pi}')) 
	= \left< \dot{\pi}', \dot{\rho} \right> - 
		\left< \dot{\pi}, \dot{\rho}' \right>$ for $(\dot{\rho}, \dot{\pi})$ and $(\dot{\rho}', \dot{\pi}')$ in $T_{(\rho, \pi)} (T^\ast Q) = Q \times Q^\ast$.   The symplectic form $\omega$ induces a linear isomorphism $\sharp : T^\ast( T^\ast Q ) \rightarrow T(T^\ast Q)$ given by 
\begin{equation} \label{canonsymp}
	\sharp ( \rho, \pi, e_\rho, e_\pi) = (\rho, \pi, e_\pi, -e_\rho).
\end{equation}

Our goal is now to investigate the induced Poisson structure $[\sharp]$ on the quotient $(T^\ast Q)/G$.  This map was defined in \eqref{prescription}.  Having described the unreduced Poisson structure $\sharp$ in \eqref{canonsymp}, it now remains for us to give an explicit description of the maps $T \pi_G$ and $T^\ast \pi_G$ in the diagram.  To this end, we consider an element $(\rho, \pi) \in T^\ast Q$, and we recall that $\pi_G(\rho, \pi) = (\mathbf{d} \rho, \pi)$.  Let $T_{(\rho, \pi)} \pi_G : T_{(\rho, \pi)} (T^\ast Q) \rightarrow T_{(\mathbf{d}\rho, \pi)} (T^\ast Q/G)$ be the tangent map to $\pi_G$ at $(\rho, \pi)$ and consider the adjoint map $T^\ast_{(\rho, \pi)} \pi_G :  T^\ast_{(\mathbf{d}\rho, \pi)} (T^\ast Q/G) \rightarrow  T^\ast_{(\rho, \pi)} (T^\ast Q)$.  

\begin{lemma}  
	The tangent and cotangent maps $T_{(\rho, \pi)} \pi_G$ and $T^\ast_{(\rho, \pi)} \pi_G$ are given by 
	\begin{equation} \label{tangent}
		T_{(\rho, \pi)} \pi_G(\rho, \pi, \dot{\rho}, \dot{\pi})
			= (\mathbf{d} \rho, \pi, \mathbf{d}\dot{\rho}, \dot{\pi})
	\end{equation}
	and 
	\begin{equation} \label{cotangent}
		T^\ast_{(\rho, \pi)} \pi_G(\mathbf{d} \rho, \pi, \bar{e}_\rho, \bar{e}_\pi) =
			(\rho, \pi, (-1)^{n-k} \mathbf{d} \bar{e}_\rho, \bar{e}_\pi).
	\end{equation}
\end{lemma}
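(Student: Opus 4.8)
The plan is to verify the two formulas by direct calculation: \eqref{tangent} by differentiating a curve, and \eqref{cotangent} by dualizing \eqref{tangent} via Stokes' theorem on the boundaryless manifold $M$.

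For \eqref{tangent}, I would observe that $\pi_G$ in \eqref{quotmap} is the (continuous) linear map $(\rho,\pi)\mapsto(\mathbf d\rho,\pi)$ between the vector spaces $T^\ast Q=\Omega^k\times\Omega^{n-k}$ and $T^\ast Q/G=\mathbf d\Omega^k\times\Omega^{n-k}$, and the tangent map of a linear map at any base point is the map itself acting on tangent vectors. Concretely, take the curve $t\mapsto(\rho+t\dot\rho,\pi+t\dot\pi)$ through $(\rho,\pi)$, push it forward by $\pi_G$ to $t\mapsto(\mathbf d\rho+t\,\mathbf d\dot\rho,\pi+t\dot\pi)$, and differentiate at $t=0$; since $\mathbf d\dot\rho\in\mathbf d\Omega^k$ the resulting vector lies in $T_{(\mathbf d\rho,\pi)}(T^\ast Q/G)$, which gives \eqref{tangent}.

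For \eqref{cotangent}, I would use that $T^\ast_{(\rho,\pi)}\pi_G$ is by definition the adjoint of $T_{(\rho,\pi)}\pi_G$ with respect to the dual pairings, i.e.\ for all $(\dot\rho,\dot\pi)$,
\[
  \bigl\langle T^\ast_{(\rho,\pi)}\pi_G(\mathbf d\rho,\pi,\bar e_\rho,\bar e_\pi),\,(\rho,\pi,\dot\rho,\dot\pi)\bigr\rangle
  = \bigl\langle (\mathbf d\rho,\pi,\bar e_\rho,\bar e_\pi),\, T_{(\rho,\pi)}\pi_G(\rho,\pi,\dot\rho,\dot\pi)\bigr\rangle .
\]
Substituting \eqref{tangent}, the right-hand side equals $\int_M(\bar e_\rho\wedge\mathbf d\dot\rho+\bar e_\pi\wedge\dot\pi)$. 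Degree counting fixes $\bar e_\rho\in\Omega^{n-k-1}$ and $\bar e_\pi\in\Omega^k$, since $\dot{\bar\rho}=\mathbf d\dot\rho$ has degree $k+1$ while $\dot{\bar\pi}$ has degree $n-k$. Applying Stokes' theorem to $\mathbf d(\bar e_\rho\wedge\dot\rho)$ together with the graded Leibniz rule yields
\[
  0=\int_M \mathbf d\bar e_\rho\wedge\dot\rho+(-1)^{n-k-1}\int_M \bar e_\rho\wedge\mathbf d\dot\rho ,
\]
so that $\int_M\bar e_\rho\wedge\mathbf d\dot\rho=(-1)^{n-k}\int_M\mathbf d\bar e_\rho\wedge\dot\rho$. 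Hence the right-hand side becomes $\int_M\bigl((-1)^{n-k}\mathbf d\bar e_\rho\wedge\dot\rho+\bar e_\pi\wedge\dot\pi\bigr)$, which is precisely the pairing \eqref{origdual} of $(\rho,\pi,(-1)^{n-k}\mathbf d\bar e_\rho,\bar e_\pi)$ with $(\rho,\pi,\dot\rho,\dot\pi)$; since $(\dot\rho,\dot\pi)$ is arbitrary, \eqref{cotangent} follows.

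The only delicate point — the step I would double-check carefully — is the sign: it is produced entirely by the fact that $\bar e_\rho$ has degree $n-k-1$ rather than $n-k$, so that the Leibniz sign in $\mathbf d(\bar e_\rho\wedge\dot\rho)$ is $(-1)^{n-k-1}$, and solving for $\int_M\bar e_\rho\wedge\mathbf d\dot\rho$ converts this into the factor $(-1)^{n-k}$ appearing in \eqref{cotangent}. I would also note in passing that $\bar e_\rho$ is determined only up to the annihilator of $\mathbf d\Omega^k$; this causes no difficulty, since any representative defines the same element of $T^\ast(T^\ast Q/G)$ and, modulo the same ambiguity, the same right-hand side of \eqref{cotangent}.
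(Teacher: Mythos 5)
Your proof is correct and follows essentially the same route as the paper: the tangent map is read off from the linearity of $\pi_G$, and the cotangent map is computed from the adjoint relation together with Stokes' theorem on the boundaryless $M$, with the sign $(-1)^{n-k}$ arising from the Leibniz rule applied to $\mathbf{d}(\bar e_\rho\wedge\dot\rho)$ with $\deg\bar e_\rho = n-k-1$. Your added degree count and the remark on the ambiguity of $\bar e_\rho$ are sound refinements of the same argument, not a different approach.
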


\medskip
\begin{proof}
The expression \eqref{tangent} for $T_{(\rho, \pi)} \pi_G$ is clear from the corresponding expression for $\pi_G$.  To prove \eqref{cotangent}, we let 
$(\dot{\rho}, \dot{\pi}) \in T_{(\rho, \pi)} (T^\ast Q)$ and consider 
\begin{align*}
\left< T^\ast_{(\rho, \pi)} \pi_G(\bar{e}_\rho, \bar{e}_\pi), 
	(\dot{\rho}, \dot{\pi}) \right>  & = \left< (\bar{e}_\rho, \bar{e}_\pi), 
	T_{(\rho, \pi)} \pi_G(\dot{\rho}, \dot{\pi}) \right> \\
	& = \left< (\bar{e}_\rho, \bar{e}_\pi), 
	(\mathbf{d}\dot{\rho}, \dot{\pi}) \right>.
\end{align*}
By using Stokes' theorem, this can be rewritten as
\begin{align*} 
\left< (\bar{e}_\rho, \bar{e}_\pi), 
	(\mathbf{d}\dot{\rho}, \dot{\pi}) \right>
& = \int_M ( \bar{e}_\rho \wedge \mathbf{d}\dot{\rho} + \bar{e}_\pi \wedge\dot{\pi}) \\
& = \int_M ( (-1)^{n-k}\mathbf{d}\bar{\alpha} \wedge \dot{\rho} + \bar{\beta}\wedge\dot{\pi})
\end{align*}
so that $T^\ast_{(\rho, \pi)} \pi_G(\bar{e}_\rho, \bar{e}_\pi) = ( (-1)^{n-k}\mathbf{d}\bar{e}_\rho, \bar{e}_\pi)$.
\end{proof}
\medskip

The reduced Poisson structure in \eqref{prescription} can now be obtained explicitly by composing the various constituent maps: 
\[
	[\sharp]_{(\mathbf{d} \rho, \pi)} = 
		T_{(\rho, \pi)} \pi_G \circ \sharp \circ T^\ast_{(\mathbf{d} \rho, \pi)} \pi_G
\]
for all $(\mathbf{d} \rho, \pi) \in T^\ast Q/G$.  Explicitly, we have
\begin{equation} \label{redpoisson}
	[\sharp](\bar{e}_\rho, \bar{e}_\pi) = 
		( \mathbf{d} \bar{e}_\pi, (-1)^{n-k-1} \mathbf{d} \bar{e}_\rho).
\end{equation}

\medskip
\section{Stokes-Dirac Structures}

In order to made the link between the reduced Poisson structure and the standard representation of Stokes-Dirac structures, we write \eqref{redpoisson} in matrix form:
\begin{equation} \label{matform1}
	\begin{pmatrix}
		\dot{\bar{\rho}} \\
		\dot{\bar{\pi}}
	\end{pmatrix}
	= 
	\begin{pmatrix}
		0 & \mathbf{d} \\
		(-1)^{n - k} \mathbf{d} & 0  
	\end{pmatrix}
	\begin{pmatrix}
		\bar{e}_\rho \\
		\bar{e}_\pi
	\end{pmatrix}.
\end{equation}
While the structure matrix has the same general form as the one employed in \cite{VaMa2002}, the relative signs are different.  This can be remedied by introducing new \emph{flow variables} $f_p, f_q$ and \emph{effort variables} $e_p, e_q$ defined as follows: 
\[
	e_p = \bar{e}_\rho, \quad e_q = (-1)^r \bar{e}_\pi,
	f_p = \dot{\bar{\rho}}, \quad f_q = (-1)^{n-p} \dot{\bar{\pi}}.
\]
Here, we have put $p = n - k$, $q = k + 1$, and $r = pq + 1$.  Note that $p + q = n + 1$.  With this choice of signs, \eqref{matform1} becomes 
\[
	\begin{pmatrix}
		f_p \\
		f_q 
	\end{pmatrix}
	= 
	\begin{pmatrix}
		0 & (-1)^r\mathbf{d} \\
		\mathbf{d} & 0  
	\end{pmatrix}
	\begin{pmatrix}
		e_p \\
		e_q
	\end{pmatrix}.
\]
This agrees precisely with the definition of Stokes-Dirac structures given in \cite{VaMa2002}.

\section{Examples}

We now revisit the examples of Stokes-Dirac structures given in \cite{VaMa2002}, showing in each case how the corresponding structure can be derived through reduction.

\newsec{Telegrapher's Equations}
Let $M$ be the real line with coordinate $x$.  The transmission line equations describe the propagation of currents $I(x, t)$ and voltages $V(x, t)$ through a lossless uniform wire and are given by 
\[
LI_t + V_x = 0 \quad \text{and} \quad CV_t + I_x = 0,
\]
where $L$ and $C$ are the distributed inductance and capacitance (see \cite{VaMa2002, JeVa2008} for more information).  We introduce also the integrated charge density as 
\[
	\mathcal{I}(x, t) = \int_{t_0}^t I(x, t') dt'.
\]

The lower boundary $t_0$ of the integral is arbitrary and as a result, $\mathcal{I}(x, t)$ is defined only up to an abitrary constant.  Hence, the configuration space for the transmission line equations is the space $Q := \Omega^0$ of one-forms $\mathcal{I}$, and the symmetry group acting on $Q$ is nothing but $G =\mathbb{R}$.  The quotient space $Q/G$ consists of integrated charge densities up to a constant and can be identified with $\mathbf{d} \Omega^0$ whose elements $\rho := \mathbf{d} \mathcal{I} = \mathcal{I}_x dx$ represent \emph{\textbf{charge densities}}.  

The transmission line equations fit into the framework of Stokes-Dirac structures: $Q$ and $G$ are defined above, and so $n = 1$ and $k = 0$.  As a result, the structure matrix is given by 
\[
\begin{pmatrix}
		0 & \mathbf{d} \\
		- \mathbf{d} & 0  
\end{pmatrix},
\]
which is precisely the expression obtained in \cite{JeVa2008}.  A similar approach can be used to derive the Stokes-Dirac structure for the vibrating string.

\newsec{Maxwell's Equations}
In the case of electromagnetism, we let $M$ be a three-dimensional Riemannian manifold without boundary, e.g. $M = \mathbb{R}^3$ with the Euclidian metric whose Hodge star is denoted by $\ast$.  To simplify the exposition, we restrict ourselves to electromagnetism in a vacuum and we choose appropriate units so that $\epsilon_0 = \mu_0 = 1$. 

We let the configuration space $Q$ be the space $\Omega^1$ of \emph{\textbf{vector potentials}} $A = A_i dx^i$ on $M$.  The group $G = \Omega^0$ of functions on $M$ acts on $Q$ as in \eqref{addaction}: 
\begin{equation} \label{EMaction}
f \cdot A = A + \mathbf{d} f, 
\end{equation}
and it is well known (see e.g. \cite{MarsdenRatiu}) that the quotient space $Q/G$ can be identified with the space $\mathbf{d} \Omega^1$ of \textbf{\emph{magnetic fields}} $B = \mathbf{d} A$.  
The fields on the tangent bundle $TQ$ are denoted by $(A, \dot{A})$, where $\dot{A} = -E$ with $E$ the electric field.  We will denote the fields on the cotangent bundle $T^\ast Q$ by $(A, \Pi)$, where $\Pi$ can be identified with $-D = - \ast E$ by means of the Legendre transform.

% We use the metric on $M$ to identify $TQ$ and $T^\ast Q$, and denote coordinates on $T^\ast Q$ henceforth by $(A, E)$.  

The Hamiltonian of electromagnetism (see \cite{Bossavit1998}) is given by 
\[
	\mathcal{H}(A, D) = \frac{1}{2} \int_M ( D \wedge \ast D + \mathbf{d} A \wedge \ast \mathbf{d} A).
\]
It is clear that $\mathcal{H}$ is invariant under the usual electromagnetic gauge symmetry \eqref{EMaction}, and hence it induces a reduced Hamiltonian given by the familiar expression 
\[
	\mathcal{H}'(B, D)
	=  \frac{1}{2} \int_M ( D \wedge \ast D + B \wedge \ast B).
\]
The variational derivatives of $\mathcal{H}'(B, D)$ are given by 
\[
	\frac{\delta \mathcal{H}'}{\delta D} = \ast D \quad
	\text{and} \quad 
	\frac{\delta \mathcal{H}'}{\delta B} = \ast B.
\]

As $n = \dim M = 3$ while $k = 1$, we have that the implicit Hamiltonian equations for electromagnetism are given by 
\[
	\begin{pmatrix}
		\dot{B} \\
		- \dot{D} 
	\end{pmatrix}
	= 
	\begin{pmatrix}
		0 & \mathbf{d} \\
		-\mathbf{d} & 0  
	\end{pmatrix}
	\begin{pmatrix}
		\ast B \\
		-\ast D
	\end{pmatrix},
\]
where the minus signs in front of $D$ are a reminder of the fact that $D = - \Pi$.  Written out in components, these equations are nothing but the Maxwell equations in terms of forms: $\dot{B} = - \mathbf{d} \ast D$ and $\dot{D} = \mathbf{d} \ast B$.

\medskip
\section{Stokes-Dirac Structures on Lie Algebras}

The idea of using Poisson reduction to derive Stokes-Dirac structures is not limited to the case where both the configuration space and the symmetry group are spaces of forms and the group action is as in \eqref{addaction}.  As long as the unreduced phase space has a symplectic or Poisson structure which is invariant under some group action, we can perform Poisson reduction and in the context of distributed Hamiltonian systems the result can rightfully be called again a Stokes-Dirac structure.  In this paragraph, we illustrate this idea by way of an example: the dynamics of a compressible isentropic fluid on a Riemannian manifold $M$ with metric $g$.  For this system, van der Schaft and Maschke propose the following Stokes-Dirac structure:
\begin{equation} \label{vdsSD}
	\frac{d}{dt}
	\begin{pmatrix} 
		\rho \\
		v 
	\end{pmatrix}
	= -
	\begin{pmatrix}
		\mathbf{d} e_v \\
		\mathbf{d} e_\rho + 
		\frac{1}{\ast \rho} \ast (
			(\ast \mathbf{d} v) \wedge (\ast e_v))
	\end{pmatrix}.
\end{equation}
where 
\[
	e_v = \frac{\delta \mathcal{H}}{\delta v}
		= \frac{1}{2} \left\Vert v \right\Vert^2 + 
		\frac{\partial}{\partial \tilde{\rho}} (\tilde{\rho} U(\tilde{\rho})), \quad
	e_\rho = \frac{\delta \mathcal{H}}{\delta \rho}
		= \mathbf{i}_{v^\sharp} \rho.	
\]
Here, $\ast : \Omega^i(M) \rightarrow \Omega^{3 - i}(M)$, $i = 0, \ldots, 3$, is the Hodge star associated to the metric $g$.  The fields $\rho$ and $v$ are the density and the velocity, respectively interpreted as a three-form and a one-form.  The function $\tilde{\rho}$ is $\ast \rho$ and $U(\tilde{\rho})$ represents the internal energy of the fluid.  The boundary terms have again been left out of the picture.  

Note that the structure matrix in \eqref{vdsSD} consists of the usual exterior derivatives, together with a convective term $\frac{1}{\ast \rho} \ast ((\ast \mathbf{d} v) \wedge (\ast e_v))$.  While this term is usually introduced in the Stokes-Dirac description a posteriori, we will now show that it can be derived from first principles through reduction.

\newsec{The Lie-Poisson Structure}
Let $G$ be an arbitrary Lie group with Lie algebra $\mathfrak{g}$.  We denote the dual of the Lie algebra by $\mathfrak{g}^\ast$.  It is well-known that $\mathfrak{g}^\ast$ is equipped with a natural Poisson structure, called the \emph{\textbf{Lie-Poisson structure}}, which can be defined as follows.  At each element $\mu \in \mathfrak{g}^\ast$, the Lie-Poisson structure determines a map 
\begin{equation} \label{LP}
	[\sharp]_\mu : T^\ast_\mu \mathfrak{g}^\ast \cong \mathfrak{g} \rightarrow T_\mu \mathfrak{g}^\ast \cong \mathfrak{g}^\ast, \quad 
	[\sharp]_\mu(\xi) = \mathrm{ad}^\ast_\xi \mu,
\end{equation}
where $\mathrm{ad}^\ast_\xi \mu$ is the co-adjoint action of $\xi \in \mathfrak{g}$ on $\mu \in \mathfrak{g}^\ast$, given by 
\begin{equation} \label{coad}
	\left< \mathrm{ad}^\ast_\xi \mu, \eta \right> 
		= \left< \mu, [\xi, \eta] \right>
\end{equation}
for all $\eta \in \mathfrak{g}$.  Note that the Lie-Poisson structure \eqref{LP} depends explicitly on the base point $\mu \in \mathfrak{g}^\ast$, in contrast to the standard Poisson structure \eqref{redpoisson}.

The Lie-Poisson structure can be obtained through Poisson reduction from the canonical symplectic structure on $T^\ast G$ so that this situation fits also in the framework of reduced Dirac structures.  For more information, we refer to \cite{MarsdenRatiu}.

\newsec{Compressible Isentropic Fluids}
In the case of an compressible fluid on a three-dimensional manifold $M$, the relevant group is the semi-direct product $S$ of the group $\mathrm{Diff}(M)$ of diffeomorphisms of $M$ with the space $\mathcal{F}(M)$ of functions on $M$ (see \cite{HoMaRa1998, MaRaWe1984}).  The multiplication in $S$ is given by $(\phi_1, f_1) \cdot (\phi_2, f_2) = (\phi_1 \circ \phi_2, f_2 + \phi_2^\ast f_1)$.

As a vector space, the Lie algebra $\mathfrak{s}$ of $S$ is the product $\mathfrak{X}(M) \times \mathcal{F}(M)$, where $\mathfrak{X}(M)$ is the space of all vector fields on $M$.  The bracket in $\mathfrak{s}$ is given by $[(\xi_1, f_1), (\xi_2, f_2)]
	= ( -[\xi_1, \xi_2]_M, 
		\pounds_{\xi_2} f_1 - \pounds_{\xi_1} f_2)$, 
where we have denoted the bracket of the vector fields $\xi_1, \xi_2$ on $M$ by $[\xi_1, \xi_2]_M$.  The dual $\mathfrak{s}^\ast$ can be identified with the product of the one-form densities $\Omega^1(M) \otimes \Omega^3(M)$ with the space $\Omega^3(M)$ of functions on $M$.  The duality pairing between elements $(\xi, f) \in \mathfrak{s}$ and $(\theta \otimes \rho, \rho) \in \mathfrak{s}^\ast$ is given by 
\[
	\left< (\xi, f), (\theta \otimes \rho, \rho) \right> 
	= \int_M (\theta(X) + f) \rho.
\]
Physically speaking, the field $\theta \otimes \rho$ encodes the \emph{momentum} of the fluid, while $\rho$ represents the \emph{density} and $\theta$ is the \emph{velocity}, interpreted as a one-form.

The coadjoint action \eqref{coad} is given in this case by 
\begin{equation} \label{coaddiff}
	\mathrm{ad}_{(\xi, f)}^\ast (\theta \otimes \rho, \rho) = [ (\pounds_\xi \theta + \mathrm{div}_\mu \xi + \mathbf{d} f) \otimes \rho,  \pounds_\xi \theta ].
\end{equation}

\newsec{Stokes-Dirac Structures in the Momentum Representation}
We can now introduce a Stokes-Dirac structure for compressible isentropic fluids as the graph of the Lie-Poisson map \eqref{LP}, where the co-adjoint action was computed previously.  If we denote variables on $T_{(\mathbf{m}, \rho)} \mathfrak{s}^\ast \cong \mathfrak{s}^\ast$ by $(\dot{\mathbf{m}}, \dot{\rho})$, and variables on $T^\ast_{(\mathbf{m}, \rho)} \mathfrak{s}^\ast \cong \mathfrak{s}$ by $(e_{\mathbf{m}},e_\rho)$, we have that the Stokes-Dirac structure is given by 
\begin{equation} \label{SDfluidmom}
			\begin{pmatrix}
			\dot{\mathbf{m}} \\
			\dot{\rho} 
		\end{pmatrix}
		=
		\begin{pmatrix}
		(\pounds_{e_{\mathbf{m}}^\flat} \theta + 
		\mathrm{div}_\rho e_{\mathbf{m}}^\flat + \mathbf{d} e_\rho) \otimes \rho \\
		\pounds_{e_{\mathbf{m}}^\flat} \theta
		\end{pmatrix}.
\end{equation}
Here we have used the metric on $M$ to identify the one-form $e_{\mathbf{m}}$ with a vector field $e_{\mathbf{m}}^\flat$.  As this expression is written primarily in terms of the fluid momentum $\mathbf{m}$, we refer to it as being in the momentum representation.  To make the link with \eqref{vdsSD}, we need to rewrite it in terms of the velocity and the density.

\newsec{The Velocity Representation}
Let $(\mathbf{m}, \rho) \in \mathfrak{s}^\ast$.  We define the \emph{\textbf{velocity}} of the fluid as the one-form $\theta$ defined implicitly by $\mathbf{m} = \theta \otimes \rho$.  To make the link with velocity as a vector field, use the metric to define $v := \theta^\flat$.

We let $V$ be the space $\Omega^1(M) \times \Omega^3(M)$ of velocity-density pairs $(\theta, \rho)$.  Its dual $V^\ast$ is the space $\Omega^2(M) \times \Omega^0(M)$ whose elements are denoted by $(e_\theta, e_\rho)$.  The duality pairing is given by 
\[
	\left< (e_\theta, e_\rho), (\theta, \rho) \right> 
		= \int_M (e_\theta \wedge \theta + e_\rho \rho).
\]

To express the Stokes-Dirac structure \eqref{SDfluidmom} in terms of the velocity-density variables $(\theta, \rho) \in V$, we introduce the map $\Phi: \mathfrak{g}^\ast \rightarrow V$ relating the momentum representation with the velocity representation: 
\[
	\Phi : (\mathbf{m}, \rho) 
	\mapsto (\theta, \rho), \quad 
		\text{where $\mathbf{m} = \theta \otimes \rho$}.
\]

The structure of the tangent map $T \Phi$ and the cotangent map $T^\ast \Phi$ is elucidated in the next two lemmas. 

\begin{lemma} \label{lemma:tangent} 
Let $(\mathbf{m}, \rho) \in \mathfrak{s}^\ast$ such that $\mathbf{m} = \theta \otimes \rho$.  The tangent map $T_{(\mathbf{m}, \rho)} \Phi: \mathfrak{s}^\ast \rightarrow V$ can be written as 
$T_{(\mathbf{m}, \rho)} \Phi(\dot{\mathbf{m}}, \dot{\rho} ) 
	= (\dot{\theta}, \dot{\rho})$, 
where the relation between $\dot{\mathbf{m}}$ and $\dot{\theta}$ is given by $\dot{\mathbf{m}} = \theta \otimes \dot{\rho} + \dot{\theta} \otimes \rho$ if $\mathbf{m} = \theta \otimes \rho$.
\end{lemma}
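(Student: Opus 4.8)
The plan is to compute $T_{(\mathbf{m},\rho)}\Phi$ directly from the definition of the tangent map, by evaluating it along an arbitrary curve. Before doing so, one should check that $\Phi$ is genuinely well defined and smooth on the open subset of $\mathfrak{s}^\ast$ where the density component is a volume form: since $\rho$ is nowhere vanishing, the linear map $\Omega^1(M)\to\Omega^1(M)\otimes\Omega^3(M)$, $\theta\mapsto\theta\otimes\rho$, is an isomorphism, so the velocity one-form $\theta$ with $\mathbf{m}=\theta\otimes\rho$ is uniquely determined by $(\mathbf{m},\rho)$; moreover, in any local coordinates the coefficients of $\theta$ are those of $\mathbf{m}$ divided by the single component of $\rho$, which depends smoothly on $(\mathbf{m},\rho)$ as long as $\rho$ stays nonvanishing. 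Hence $\Phi\colon(\mathbf{m},\rho)\mapsto(\theta,\rho)$ is a smooth map.

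Next I would pick a curve $t\mapsto(\mathbf{m}(t),\rho(t))$ in $\mathfrak{s}^\ast$ with $(\mathbf{m}(0),\rho(0))=(\mathbf{m},\rho)$ and $\left.\frac{d}{dt}\right|_{t=0}(\mathbf{m}(t),\rho(t))=(\dot{\mathbf{m}},\dot{\rho})$, and let $\theta(t)$ be the associated velocity, so that $\mathbf{m}(t)=\theta(t)\otimes\rho(t)$ for $t$ near $0$ and $\Phi(\mathbf{m}(t),\rho(t))=(\theta(t),\rho(t))$. Differentiating the identity $\mathbf{m}(t)=\theta(t)\otimes\rho(t)$ at $t=0$ by the Leibniz rule for the pointwise tensor product of a one-form with a three-form gives $\dot{\mathbf{m}}=\dot{\theta}\otimes\rho+\theta\otimes\dot{\rho}$, where $\dot{\theta}:=\left.\frac{d}{dt}\right|_{t=0}\theta(t)$. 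Since the second slot of $\Phi$ is the identity on the density component, its derivative along the curve is simply $\dot{\rho}$. Therefore $T_{(\mathbf{m},\rho)}\Phi(\dot{\mathbf{m}},\dot{\rho})=(\dot{\theta},\dot{\rho})$ with $\dot{\theta}$ characterized by $\dot{\mathbf{m}}=\theta\otimes\dot{\rho}+\dot{\theta}\otimes\rho$, which is exactly the claimed formula.

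Finally, one must observe that $\dot{\theta}$ is well defined by this relation, independently of the chosen curve: given $\dot{\mathbf{m}}$, $\dot{\rho}$, $\theta$ and $\rho$, the equation $\dot{\theta}\otimes\rho=\dot{\mathbf{m}}-\theta\otimes\dot{\rho}$ has a unique solution $\dot{\theta}\in\Omega^1(M)$ precisely because $\rho$ is nowhere vanishing, so the right-hand side admits a unique quotient by $\rho$. I expect this bookkeeping --- well-definedness and smoothness of $\Phi$ and of the quotient operation --- to be the only real content; once it is in place, the tangent map is just the Leibniz rule applied to $\mathbf{m}=\theta\otimes\rho$.
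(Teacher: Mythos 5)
Your proof is correct and takes essentially the same route as the paper, whose entire argument is the observation that the formula is the Leibniz rule applied to $\mathbf{m}=\theta\otimes\rho$. Your additional verification that $\Phi$ is well defined and smooth, and that $\dot{\theta}$ is uniquely determined by $\dot{\mathbf{m}}-\theta\otimes\dot{\rho}$ because $\rho$ is nowhere vanishing, is careful bookkeeping the paper omits but does not constitute a different argument.
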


The proof of this lemma is nothing but the observation that the rate of change $\dot{\mathbf{m}}$ of the momentum density is influenced both by the rate of change of velocity $\dot{\theta} \otimes \rho$ as well as by the change in density $\theta \otimes \dot{\rho}$.  The structure of the dual map $T_{(\mathbf{m}, \rho)}^\ast \Phi$ is clarified in the following lemma, the proof of which is given in the appendix.  

\begin{lemma} \label{lemma:dual} 
Consider an element $(\mathbf{m}, \rho)$ of $\mathfrak{s}^\ast$ such that $\mathbf{m} = \theta \otimes \rho$ and an element $(e_\theta, e_\rho)$ of $V^\ast$.  The dual mapping $T_{(\mathbf{m}, \rho)}^\ast \Phi : V^\ast \rightarrow \mathfrak{s}$ is then given by 
\[
	T_{(\mathbf{m}, \rho)}^\ast \Phi (e_\theta, e_\rho) = \left( \frac{(\ast e_\theta)^\sharp}{\ast \rho}, e_\rho - \frac{\ast( e_\theta \wedge \theta )}{\ast \rho} \right). 
\]
\end{lemma}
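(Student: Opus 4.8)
The plan is to compute the dual map $T^\ast_{(\mathbf{m},\rho)}\Phi$ directly from its defining adjoint relation, namely
\[
	\left< T^\ast_{(\mathbf{m},\rho)}\Phi(e_\theta, e_\rho), (\dot{\mathbf{m}}, \dot{\rho}) \right>_{\mathfrak{s}}
	= \left< (e_\theta, e_\rho), T_{(\mathbf{m},\rho)}\Phi(\dot{\mathbf{m}}, \dot{\rho}) \right>_{V},
\]
valid for all $(\dot{\mathbf{m}}, \dot{\rho}) \in \mathfrak{s}^\ast$. By Lemma \ref{lemma:tangent}, the right-hand side equals $\int_M (e_\theta \wedge \dot\theta + e_\rho\, \dot\rho)$, where $\dot\theta$ and $\dot\rho$ are constrained by $\dot{\mathbf{m}} = \theta\otimes\dot\rho + \dot\theta\otimes\rho$. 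The idea is to treat $(\dot\theta,\dot\rho)\in\Omega^1(M)\times\Omega^3(M)$ as the free variables (this is legitimate since the map $(\dot\theta,\dot\rho)\mapsto(\dot{\mathbf{m}},\dot\rho)$ is a linear isomorphism of $V$ onto $\mathfrak{s}^\ast$, as $\rho$ is assumed to be a volume form so that tensoring with $\rho$ is invertible), and to re-express the left-hand side in the same variables and match coefficients.

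The key computational step is to rewrite the left-hand pairing. Write $T^\ast_{(\mathbf{m},\rho)}\Phi(e_\theta,e_\rho) = (\xi, h) \in \mathfrak{X}(M)\times\mathcal{F}(M) = \mathfrak{s}$. Then, using the pairing on $\mathfrak{s}$ together with the identification $\dot{\mathbf{m}} = \dot{\dot\theta}\otimes\rho + \theta\otimes\dot\rho$ — more precisely, writing $\dot{\mathbf{m}} = \omega\otimes\rho$ with $\omega = \dot\theta + (\dot\rho/\rho)\,\theta$ where $\dot\rho/\rho$ denotes the function $\ast\dot\rho/\ast\rho$ — one gets
\[
	\left< (\xi,h), (\dot{\mathbf{m}},\dot\rho)\right>_{\mathfrak{s}}
	= \int_M \bigl(\omega(\xi) + h\,\tfrac{\dot\rho}{\rho}\bigr)\rho
	= \int_M \bigl(\dot\theta(\xi) + \tfrac{\dot\rho}{\rho}\,\theta(\xi) + h\,\tfrac{\dot\rho}{\rho}\bigr)\rho.
\]
Now I would convert everything to wedge products: $\dot\theta(\xi)\,\rho = \dot\theta \wedge \bigl(\ast(\xi^\flat)\bigr)$ up to the usual Hodge-star sign bookkeeping (here $\xi^\flat$ is the one-form metrically dual to $\xi$, and $\ast(\xi^\flat)\in\Omega^2(M)$), and $(\dot\rho/\rho)\,\theta(\xi)\,\rho = \theta(\xi)\,\dot\rho$, and $h\,(\dot\rho/\rho)\,\rho = h\,\dot\rho$. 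Comparing with $\int_M(e_\theta\wedge\dot\theta + e_\rho\,\dot\rho)$ and matching the $\dot\theta$-terms forces $\ast(\xi^\flat) = \pm e_\theta$, i.e. $\xi^\flat = \pm\ast e_\theta$ after applying $\ast$ again and using $\ast\ast = \pm\mathrm{id}$; dividing by the normalization produces $\xi = (\ast e_\theta)^\sharp/(\ast\rho)$. Matching the $\dot\rho$-terms then gives $e_\rho = h + \theta(\xi)$, and substituting $\xi$ and rewriting $\theta(\xi)\,\rho$ back in terms of $\ast(e_\theta\wedge\theta)$ yields $h = e_\rho - \ast(e_\theta\wedge\theta)/(\ast\rho)$, which is the claimed formula.

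The main obstacle I anticipate is the sign and normalization bookkeeping in the Hodge-star identities on a three-manifold — keeping straight the factors $(-1)^{i(n-i)}$ in $\ast\ast$, the identity $\alpha\wedge\ast\beta = \langle\alpha,\beta\rangle\,\mu$ for forms of equal degree, and the conversion between $\theta(\xi)$ (contraction) and wedge expressions like $\ast(e_\theta\wedge\theta)$. Since $n=3$ and the relevant degrees are fixed ($\theta\in\Omega^1$, $e_\theta\in\Omega^2$, $\rho\in\Omega^3$), all these signs are in fact positive or can be absorbed, but verifying this carefully is where the real work lies. A secondary point to justify cleanly is that the substitution of $(\dot\theta,\dot\rho)$ for $(\dot{\mathbf{m}},\dot\rho)$ as independent test variables is valid, which again rests on $\rho$ being nowhere-zero so that $\ast\rho$ is an invertible function — this is implicit in interpreting $\rho$ as a density and is used already in defining $\theta$ via $\mathbf{m} = \theta\otimes\rho$.
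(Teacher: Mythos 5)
Your proposal is correct and follows essentially the same route as the paper's appendix proof: both start from the defining adjoint relation, use the constraint $\dot{\mathbf{m}} = \dot{\theta}\otimes\rho + \theta\otimes\dot{\rho}$ to trade between the momentum and velocity variables, and rely on the same Hodge-star identities (with $\ast\ast = \mathrm{id}$ and all signs benign since $n=3$) to convert between contractions like $\theta(\xi)$ and wedge expressions like $\ast(e_\theta\wedge\theta)/\ast\rho$. The only difference is cosmetic --- the paper substitutes $\dot{\theta} = (\dot{m}-\dot{\tilde{\rho}}\theta)/\tilde{\rho}$ into the right-hand side and massages it into the form of the $\mathfrak{s}$-pairing, whereas you expand the left-hand side with unknown $(\xi,h)$ and match coefficients --- so this is the same computation read in the opposite direction.
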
  

\newsec{Stokes-Dirac Structures in the Velocity Representation}
Let $(\theta, \rho)$ be an element of $V$ and put $\mathbf{m} = \theta \otimes \rho$.  The reduced Lie-Poisson structure 
$[\sharp]_{(\theta, \rho)}: V^\ast \rightarrow V$ in the velocity representation is given as the composition of three maps 
\[
[\sharp]_{(\theta, \rho)} = 
T_{(\mathbf{m}, \rho)} \Phi \circ
\circ [\sharp]_{(\mathbf{m}, \rho)} \circ
T_{(\mathbf{m}, \rho)}^\ast \Phi.
\]

\begin{theorem} \label{SDvel}
The reduced Lie-Poisson structure in the velocity representation is given by 
\[
	[\sharp]_{(\theta, \rho)}(e_\theta, e_\rho) 
	= \left( \mathbf{d} e_\rho + 
	\frac{1}{\ast \rho}
	\mathbf{i}_{(\ast e_\theta)^\sharp} \mathbf{d} \theta, \mathbf{d} e_\theta
\right).
\]
\end{theorem}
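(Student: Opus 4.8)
The plan is to compute the composition $[\sharp]_{(\theta, \rho)} = T_{(\mathbf{m}, \rho)} \Phi \circ [\sharp]_{(\mathbf{m}, \rho)} \circ T_{(\mathbf{m}, \rho)}^\ast \Phi$ by evaluating the three maps in sequence, starting from an arbitrary covector $(e_\theta, e_\rho) \in V^\ast$. First I would apply Lemma \ref{lemma:dual} to obtain $T_{(\mathbf{m}, \rho)}^\ast \Phi (e_\theta, e_\rho) = (\xi, f) \in \mathfrak{s}$, where $\xi = (\ast e_\theta)^\sharp / (\ast \rho)$ and $f = e_\rho - \ast(e_\theta \wedge \theta)/(\ast \rho)$; I will abbreviate $w := (\ast e_\theta)^\sharp$ so that $\xi = w / \tilde{\rho}$ with $\tilde{\rho} = \ast \rho$. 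Next I would substitute $(\xi, f)$ into the coadjoint action formula \eqref{coaddiff}, i.e. into the Lie--Poisson map $[\sharp]_{(\mathbf{m}, \rho)}$, producing $(\dot{\mathbf{m}}, \dot{\rho}) \in \mathfrak{s}^\ast$ of the form $\dot{\mathbf{m}} = (\pounds_\xi \theta + (\mathrm{div}_\rho \xi) \,?\, + \mathbf{d} f) \otimes \rho$ and $\dot{\rho} = \pounds_\xi \rho$ (reading off the density component, which in \eqref{coaddiff} is $\pounds_\xi \theta$ in the displayed notation but should be understood as the transport of the density three-form). Finally I would apply $T_{(\mathbf{m}, \rho)} \Phi$, which by Lemma \ref{lemma:tangent} just means solving $\dot{\mathbf{m}} = \dot{\theta} \otimes \rho + \theta \otimes \dot{\rho}$ for $\dot{\theta}$, giving $\dot{\theta} = (\dot{\mathbf{m}} - \theta \otimes \dot{\rho})/\rho$ in the obvious sense of dividing out the volume form.

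The heart of the computation is the simplification of $\dot{\theta}$. Writing $\dot{\rho} = \pounds_\xi \rho = (\mathrm{div}_\rho \xi)\,\rho$ (by Cartan's formula, since $\rho$ is top-degree), the term $\theta \otimes \dot{\rho}$ exactly cancels the $\mathrm{div}$-term coming from the momentum component of \eqref{coaddiff}, so that $\dot{\theta} = \pounds_\xi \theta + \mathbf{d} f$. I would then expand $\pounds_\xi \theta = \mathbf{i}_\xi \mathbf{d} \theta + \mathbf{d}(\mathbf{i}_\xi \theta)$ by Cartan's magic formula, and observe that $\mathbf{d} f = \mathbf{d} e_\rho - \mathbf{d}\!\left( \ast(e_\theta \wedge \theta)/\tilde{\rho} \right)$. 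The plan is to show that the exact remainder $\mathbf{d}(\mathbf{i}_\xi \theta) - \mathbf{d}\!\left( \ast(e_\theta \wedge \theta)/\tilde{\rho} \right)$ vanishes; this should follow from the pointwise identity $\mathbf{i}_\xi \theta = \theta(w)/\tilde{\rho} = \ast(e_\theta \wedge \theta)/\tilde{\rho}$, where the last equality uses $w = (\ast e_\theta)^\sharp$ together with the standard Hodge-star identity $\theta \wedge \ast e_\theta = \langle \theta^\sharp, (\ast e_\theta)^\sharp \rangle\, \mu$ relating the wedge of a one-form with a two-form's dual to an inner product. Combining the surviving pieces yields $\dot{\theta} = \mathbf{d} e_\rho + \mathbf{i}_\xi \mathbf{d}\theta = \mathbf{d} e_\rho + \frac{1}{\tilde{\rho}} \mathbf{i}_{(\ast e_\theta)^\sharp} \mathbf{d}\theta$, which is the first component of the claimed formula.

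For the density component, $\dot{\rho}$ in the velocity representation is unchanged by $T\Phi$ (the second slot is the identity), and I would rewrite $\dot{\rho} = \pounds_\xi \rho$ in terms of $e_\theta$: since $\xi = (\ast e_\theta)^\sharp/\tilde{\rho}$ and $\rho = \tilde{\rho}\,\mu$, we get $\pounds_\xi \rho = \mathbf{d}\,\mathbf{i}_\xi(\tilde{\rho}\mu) = \mathbf{d}\, \mathbf{i}_{(\ast e_\theta)^\sharp}\mu = \mathbf{d}(\ast\ast e_\theta) = \pm\, \mathbf{d} e_\theta$, using $\mathbf{i}_{X^\sharp}\mu = \ast(X^\flat)$ and the sign of $\ast\ast$ on two-forms in dimension three (which is $+1$). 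This gives $\dot{\rho} = \mathbf{d} e_\theta$, completing the identification.

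The main obstacle I anticipate is bookkeeping of signs and of the precise meaning of the symbols in \eqref{coaddiff} — in particular, the density-slot entry is written there as $\pounds_\xi \theta$, which cannot be literally correct dimensionally and must be read as the Lie-derivative transport of the density three-form; getting the $\mathrm{div}_\rho$ normalization consistent with this reading is where the cancellation $\theta \otimes \dot\rho \leftrightarrow (\mathrm{div}_\rho \xi)\theta \otimes \rho$ either works cleanly or requires a correction term. The only genuinely nontrivial analytic input is the Hodge identity $\mathbf{i}_{(\ast e_\theta)^\sharp}\theta = \ast(e_\theta \wedge \theta)$ (and its companion $\mathbf{i}_{(\ast e_\theta)^\sharp}\mu = e_\theta$), both of which are standard facts about the Hodge star on an oriented Riemannian $3$-manifold; everything else is Cartan calculus.
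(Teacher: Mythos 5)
Your proposal is correct and follows essentially the same route as the paper's own proof: compose $T^\ast\Phi$, the coadjoint-action map, and $T\Phi$ in that order, cancel the exact term $\mathbf{d}(\mathbf{i}_{X_{e_\theta}}\theta)$ against $\mathbf{d}\bigl(\ast(e_\theta\wedge\theta)/\ast\rho\bigr)$ via Cartan's formula and the Hodge identity, and cancel the divergence term against $\theta\otimes\dot\rho$ when solving $\dot{\mathbf{m}}=\dot\theta\otimes\rho+\theta\otimes\dot\rho$. Your reading of the density slot of \eqref{coaddiff} as $\pounds_\xi\rho$ (and the identity $\mathbf{i}_{(\ast e_\theta)^\sharp}\rho=e_\theta$ giving $\dot\rho=\mathbf{d}e_\theta$) matches exactly what the paper does.
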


\begin{proof}
Consider an element $(e_\theta, e_\rho)$ of $V^\ast$.   
For the sake of conciseness, given a two-form $e_\theta$ we define a vector field $X_{e_\theta}$ given by 
\begin{equation} \label{vector}
	X_{e_\theta} = \frac{(\ast e_\theta)^\sharp}{\ast \rho}.
\end{equation}
The value of $[\sharp]_{(\theta, \rho)}(e_\theta, e_\rho)$ is now given by the following diagram.  
\[
\xymatrix{
(e_\theta, e_\rho) \ar[d]^{T^\ast \Phi} \\
\left( X_{e_\theta}, e_\rho - \frac{\ast( e_\theta \wedge \theta )}{\ast \rho} \right)
\ar[d]^{[\sharp]} \\
\left(  \left(\pounds_{X_{e_\theta}} \theta + \mathrm{div}_\rho (X_{e_\theta}) \theta + \mathbf{d} e_\rho - \mathbf{d} \left(
	\frac{\ast( e_\theta \wedge \theta )}{\ast \rho} \right)
	\right) \otimes \rho , \pounds_{X_{e_\theta}} \rho \right) \ar[d]^{T\Phi} \\
\left( \mathbf{d} e_\rho + 
	\mathbf{i}_{X_{e_\theta}} \mathbf{d} \theta, \mathbf{d} e_\theta
\right) 	
}\]
Here, the first two maps have been given in lemma~\ref{lemma:dual} and in \eqref{SDfluidmom}.  The third expression can be simplified by using the fact that
\[
	\mathbf{i}_{X_{e_\theta}} \theta   =
	\frac{1}{\ast \rho} g( \theta, \ast e_\theta)  =
	 \frac{\ast(e_\theta \wedge \theta)}{\ast \rho}  
\]
so that 
\begin{align*}
	\pounds_{X_{e_\theta}} \theta & = 
		\mathbf{i}_{X_{e_\theta}} \mathbf{d}  \theta +
		\mathbf{d}  \mathbf{i}_{X_{e_\theta}} \theta \\
		& = \mathbf{i}_{X_{e_\theta}} \mathbf{d}  \theta 
		+ \mathbf{d} \left( 
		\frac{\ast(e_\theta \wedge \theta)}%
			{\ast \rho}\right). 
\end{align*}
The third expression hence simplifies to 
\[
(\dot{\mathbf{m}}, \dot{\rho}) :=
	\left( (\mathbf{i}_{X_{e_\theta}} \mathbf{d}  \theta
	+ \mathrm{div}_\rho (X_{e_\theta}) \theta + \mathbf{d} e_\rho)\otimes \rho, 
	\pounds_{X_{e_\theta}} \rho
	\right).
\]
The effect of the map $T\Phi$ is to cancel the divergence in the above expression.  Since $\dot{\rho} = 
		\pounds_{X_{e_\theta}} \rho = \mathrm{div}_\rho (X_{e_\theta}) \rho$,
we have that $\dot{\theta}$ is defined by 
\begin{align*}
	\dot{\mathbf{m}} & = \dot{\theta} \otimes \rho + 
		\theta \otimes \dot{\rho} \\
		& = 
	(\dot{\theta} + \mathrm{div}_\rho(X_{e_\theta}) 
		\theta) \otimes \rho
\end{align*}
and therefore $\dot{\theta} = \mathbf{i}_{X_{e_\theta}} \mathbf{d}  \theta + \mathbf{d} e_\rho$.  Noting furthermore that, since $\rho$ has maximal degree, 
\[
\pounds_{X_{e_\theta}} \rho = 
	\mathbf{d} \mathbf{i}_{X_{e_\theta}} \rho
	= \mathbf{d} e_\theta,
\]
we have that $T\Phi(\dot{\mathbf{m}}, \dot{\rho})$ is given by 
\[
	T\Phi(\dot{\mathbf{m}}, \dot{\rho})
	= (\dot{\theta}, \dot{\rho}) 
	= (\mathbf{i}_{X_{e_\theta}} \mathbf{d}  \theta + \mathbf{d} e_\rho, \mathbf{d} e_\theta).
\]
This concludes the proof.
\end{proof}

\medskip
Finally, the Stokes-Dirac structure described in theorem~\ref{SDvel} can be made to agree with \eqref{SDfluidmom} by observing that the convective term can be rewritten as (see \cite{VaMa2002} for a proof)
\begin{equation} \label{convective}
\frac{1}{\ast \rho}
	\mathbf{i}_{(\ast e_\theta)^\sharp} \mathbf{d} \theta
	= \frac{1}{\ast \rho} \ast(\ast e_\theta \wedge \ast\mathbf{d} \theta).
\end{equation}
%
%By fixing $(\theta, \rho) \in V$ and denoting the elements of $T_{(\theta, \rho)} V \cong V$ as $(f_\theta, f_\rho) \in \Omega^1(M) \times \Omega^3(M)$, we can characterize the Stokes-Dirac structure as the graph of the mapping
%\[
%	\begin{pmatrix} 
%		f_\theta \\
%		f_\rho
%	\end{pmatrix}
%	=
%	\begin{pmatrix} 
%		\mathbf{d} e_\rho  
%			+ \frac{1}{\ast \rho} \ast(\ast e_\theta \wedge \ast\mathbf{d} \theta)\\
%		\mathbf{d} e_\theta
%	\end{pmatrix}.
%\]

In this way, we recover precisely the Stokes-Dirac structure \eqref{vdsSD}.  Whereas the convective term \eqref{convective} was introduced in \cite{VaMa2002} on an ad-hoc basis in order to reproduce Euler's equations, it appears here in a natural way through Poisson reduction.

\medskip
\section{Conclusions and Outlook}

In this paper we have studied Stokes-Dirac structures from a geometric point of view.  We have shown that most common examples of Stokes-Dirac structures arise through symmetry reduction of a canonical Dirac structure on an infinite-dimensional phase space. 

We now sketch some directions for future research.  Stokes-Dirac structures were originally conceived in \cite{VaMa2002} to deal with systems that are controlled through the boundary.  It would be of considerable interest to extend the reduction methods of this paper to boundary control systems.  In the case of Stokes-Dirac structures on the space $\Omega^k$ of $k$-forms, boundary controls can be incorporated in the reduction method by a suitable identification of the dual space $(\Omega^k)^\ast$, allowing forms with support concentrated on the boundary.

%Whereas for a closed manifold there is a natural identification of the latter with $\Omega^{n-k}$, different choices exist for the case of manifolds with non-trivial boundary.  By allowing forms in the dual space to have distributional support , boundary controls can be incorporated into the reduction picture.  This will addressed in future work.  At the same time, we will also address other examples, such as the Stokes-Dirac structure governing the Timoshenko beam \cite{MaMe2004} or higher-order Stokes-Dirac structures \cite{MaScMe2004}.

%  A second point of interest concerns the treatment of field theories that depend on higher-order derivatives, such as the Euler beam.  A formalism for higher-order Stokes-Dirac structures has been proposed in \cite{NiYa2004} but the link with reduction theory has hitherto not been explored.

Secondly, the Stokes-Dirac description starts from field theories in which time is treated on a different footing from the spatial variables.  While covariant Stokes-Dirac structures have been addressed in \cite{VaMa2002}, more remains to be done.  It is well-known (see for instance \cite{gimmsyII}) that the canonical symplectic form on the space of fields arises from a covariant multisymplectic form once a splitting of space and time is chosen.  It is therefore likely that covariant Stokes-Dirac structures can be defined directly in terms of multisymplectic forms.  This, and the link with the multi-Dirac structures of \cite{VaYoMa2010}, will be explored in a forthcoming publication.

\appendix[Proof of Lemma~\ref{lemma:dual}]

Throughout this paragraph, we use the notations of lemma~\ref{lemma:dual}.  The value of $T_{(\mathbf{m}, \rho)}^\ast \Phi (e_\theta, e_\rho)$ is defined by   
\begin{align}
\left< T_{(\mathbf{m}, \rho)}^\ast \Phi (e_\theta, e_\rho), 
(\dot{\mathbf{m}}, \dot{\rho}) \right> & =
\left< (e_\theta, e_\rho), T_{(\mathbf{m}, \rho)} \Phi 
(\dot{\mathbf{m}}, \dot{\rho}) \right> \nonumber \\
& = \left< (e_\theta, e_\rho), (\dot{\theta}, \dot{\rho}) \right> \nonumber \\
& = \int_M (\dot{\rho} e_\rho + \dot{\theta} \wedge e_\theta).
\label{integral}
\end{align}
We introduce $\tilde{\rho} := \ast \rho$, $\dot{\tilde{\rho}} := \ast \dot{\rho}$ and we let $\dot{m}$ be the one-form defined by $\dot{\mathbf{m}} = \dot{m} \otimes dV$, where $dV$ is the Riemannian volume form.  The one-form $\dot{\theta}$ can then be expressed as $\dot{\theta} = (\dot{m} - \dot{\tilde{\rho}}\theta)/\tilde{\rho}$. The second term in the integrand of \eqref{integral} can therefore be written as
\begin{equation} \label{diff}
\dot{\theta} \wedge e_\theta  = 
\frac{1}{\tilde{\rho}}\left( 
\dot{m} \wedge e_\theta
- \dot{\tilde{\rho}} \theta \wedge e_\theta 
\right).
\end{equation}
We recall the definition \eqref{vector} of the vector field $X_{e_\theta}$ associated to the two-form $e_\theta$ and note that $\mathbf{i}_{X_{e_\theta}} dV = e_\theta/\tilde{\rho}$.  We now address the first term in \eqref{diff}.  Since $dV$ has maximal degree, $\dot{m} \wedge dV = 0$ and therefore 
\[
	0 = \mathbf{i}_{X_{e_\theta}} (\dot{m} \wedge dV) 
	= (\mathbf{i}_{X_{e_\theta}} \dot{m}) dV
	- \dot{m}  \wedge \mathbf{i}_{X_{e_\theta}} dV.
\]
As a result, 
\begin{equation} \label{rewrite}
(\mathbf{i}_{X_{e_\theta}} \dot{m}) dV
	= \dot{m}  \wedge \mathbf{i}_{X_{e_\theta}} dV
	= \tilde{\rho}^{-1} \dot{m} \wedge e_\theta.
\end{equation}

Secondly, we can trivially rewrite $\theta \wedge e_\theta = \ast(\theta \wedge e_\theta) dV$, and by substituting this and \eqref{rewrite} into \eqref{diff}, we obtain 
\[
	\dot{\theta} \wedge e_\theta 
	= 
	(\mathbf{i}_{X_{e_\theta}} \dot{m}) \, dV 
	- \frac{\ast (\theta \wedge e_\theta)}{\tilde{\rho}} 
		\dot{\rho}.
\]
The integral \eqref{integral} then becomes
\begin{multline*}
\left< T_{(\mathbf{m}, \rho)}^\ast \Phi (e_\theta, e_\rho), 
(\dot{\mathbf{m}}, \dot{\rho}) \right> \\
 = \int_M \left[
	\left(e_\rho - 
	\frac{\ast (\theta \wedge e_\theta)}{\tilde{\rho}} 
		\right) \dot{\rho} +
	(\mathbf{i}_{X_{e_\theta}} \dot{m}) 
	\right] \, dV,
\end{multline*}
which concludes the proof of lemma~\ref{lemma:dual}.

%\bibliography{biblio}
%\bibliographystyle{IEEEtrans}

\end{document}